\documentclass{ifacconf}

\usepackage{graphicx}      
\usepackage{natbib}        
\usepackage{amsmath}
\usepackage[caption=false,font=footnotesize]{subfig} 
\usepackage{stfloats}   
\usepackage{amssymb}
\usepackage{xcolor}
\usepackage{algorithm}
\usepackage[noend]{algpseudocode}
\algrenewcommand\algorithmicrequire{\textbf{Input:}}
\algrenewcommand\algorithmicensure{\textbf{Output:}}

\begin{document}
	\begin{frontmatter}
		
		\title{Pursuing Optimal Stepsize in Adaptive Gradient-Based Quadratic Optimization}
		
		\thanks[footnoteinfo]{{This work was supported in part by China Scholarship Council, and partially by National Major Science and Technology Project of China under Grant No. TC240A9ED-74.}}
		
		\author[se-uni]{Yifan Wang}
        \author[unipd]{Luca Ballotta}
		\author[unipd]{Ruggero Carli}
        \author[se-uni]{Xianghui Cao}
		\author[unipd]{Luca Schenato}
		
		\address[se-uni]{School of Automation, Southeast University, Nanjing, China \\
        (e-mail: \{evan, xhcao\}@seu.edu.cn)}
        \address[unipd]{Department of Information Engineering, University of Padova,
        Padova, Italy
        (e-mail: \{ballotta, carlirug, schenato\}@dei.unipd.it)}
		
		\begin{abstract}
			In this paper, we address the problem of achieving fast convergence in gradient descent for quadratic functions without relying on \emph{a-priori} knowledge of global function parameters.
			Inspired by adaptive stepsize algorithms for smooth convex functions,
			we propose a computationally lightweight strategy based on running estimates of minimal and maximal local curvatures.
			We prove that our proposed algorithm converges to the optimal constant stepsize which achieves the fastest convergence.
			Simulations show that the convergence rate achieved by our proposed algorithm is comparable or superior to recent adaptive approaches both in the quadratic case under consideration and in a preliminary test on logistic classification.
		\end{abstract}
		
		\begin{keyword}
			Adaptive stepsize, gradient descent, convergence rate, quadratic optimization.
		\end{keyword}
		
	\end{frontmatter}
	
	\section{Introduction}
	Convex optimization is the core technology for solving many control, scheduling, and learning problems \citep{mpc,evan,ruggero}.
	Convergence speed and stability of optimization algorithms are critical performance requirements, especially in online optimization.
	It is well known that the stepsize of an algorithm is a key parameter affecting both convergence rate and stability.
	However, effectively choosing the stepsize typically requires tedious tuning, even for standard algorithms such as gradient descent.
	In fact, many existing optimization algorithms use a constant stepsize whose selection relies on the knowledge of a global Lipschitz constant, which is difficult to obtain in practice and often leads to overly conservative choices and slow convergence.
	On the other hand, methods that use decaying stepsizes inevitably cause slow convergence.
	
	Recently, the analysis and design of adaptive stepsizes for gradient descent have become a very active research topic to go beyond those performance limitations. For instance, \cite{linesearch} proposed line-search or backtracking strategies to adapt the stepsize, which require additional gradient evaluations and thus a substantial computational burden. AdaGrad-Norm stepsizes by \cite{adagrad} adaptively modifies the proximal function, which significantly simplifies setting a learning rate. Another well-known method is Polyak's stepsize by \cite{polyak}.
	However,
	AdaGrad-type and Polyak's rules require prior knowledge of the distance to a solution or the optimal value of cost function, which are unrealistic to precisely evaluate.
    Subsequent works, e.g., \cite{adagrad_ICML_2023} are devoted to circumventing this problem, but computing the stepsize still involves expensive operations such as multiple evaluations of the objective function and its gradient or high-dimensional running statistics updated at every iteration.
	Therefore, they are not considered truly adaptive methods.
	
	The optimization literature has explored adaptive stepsizes that adjust to the local smoothness of the objective.
	An early work in this direction is Barzilai-Borwein (BB) method by \cite{bb}, which adaptively computes stepsize using two consecutive gradients. However, BB method can diverge, in general. 
	\cite{adaptive_without_descent} proposed adaptive gradient descent (AdGD) for smooth convex functions and established its convergence with a novel proof.
	AdGD was further developed to speed up the convergence rate with a larger stepsize bound by \cite{adaptive_proximal,adgd_proximal2}. 
	\cite{Adabb} proposed a variant of the BB method that globally converges for general convex function, by incorporating a rate control term for the stepsize inspired by AdGD.
	\cite{iannelli2025} made the first attempt to combine past and one-step ahead information to estimate the local curvature.
	In light of these recent developments,
	we use estimation of local curvature to develop our adaptive algorithm.
	
    Although the aforementioned works focus on adaptive
    stepsizes to overcome conservative global Lipschitz con-
    stant and thus speed up convergence, they overlook that
    adaptively seizing optimal stepsizes is possible to further improve
    the convergence rate. 
	Inspired by existing adaptive rules, we make the first attempt to adaptively compute optimal constant stepsize in terms of the convergence rate.
	Note that such optimal constant stepsizes cannot be generally computed \emph{a priori} and thus require adaptive strategies.
	We focus on quadratic cost functions because we can obtain closed-form expressions of convergence rate and achieve analytical insight.
	These findings will be useful to future efforts on convex functions where exact quantification of convergence rate and knowledge of global parameters such as Lipschitz constant is unrealistic.
	
	\subsubsection*{Organization:} We present the problem statement in Section~\ref{sec:problem-statement}.
	In Section~\ref{sec:preliminaries}, we introduce preliminaries on estimating local curvature under constant and varying stepsizes. We give the proposed adaptive stepsize law and analyze its properties in Section~\ref{sec:main-results}. In Section~\ref{sec:numerical-study}, we show distinct features of the proposed stepsize law by numerical tests and compare it with recent strategies.
	Finally, we come to the conclusion in Section~\ref{sec:conclusion}. 
	
	\subsubsection*{Notation:} We use $\mathbb{R}^n$ for $n$-dimensional real vector space, and $\mathbb{R}^{n\times m}$ to denote the set of $n\times m$ real matrices. $\mathbb{N}_+$ denotes the set of positive natural numbers.
	
	\section{Problem Statement}\label{sec:problem-statement}
	Let us begin with the convex optimization problem 
	\begin{equation}\label{problem0}
		\min_{x\in\mathbb{R}^n} f(x).
	\end{equation}
	The cost function $f(x)$ is $L$-smooth and $\mu$-strongly convex.
	To solve problem \eqref{problem0}, one of the basic methods is gradient descent (GD) that recursively generates the iterates
	\begin{equation}\label{gd0}
		x_{k+1}=x_k-\alpha_k \nabla f(x_k),
	\end{equation}
	where $\alpha_k$ is the stepsize (possibly constant $\alpha_{k}\equiv\alpha$).
	
	To characterize the effectiveness of our algorithm,
	it is instrumental to define Lyapunov convergence, a metric that characterizes the rate (speed) of a time-varying system. 
	\begin{defn}\label{def1}
		[\cite{lyapunov_exponent}] The Lyapunov convergence of a dynamical system with error state $e_k:=x_k-x^*$ is
		\begin{equation}
			\tilde{\lambda}:=\lim_{k\rightarrow {+\infty}}\sup{\left( \frac{\ln{\|e_k\|}}{k} \right)},
		\end{equation}
        where $x^*$ is an optimal solution.
	\end{defn}
	Definition 1 implies $\tilde{\lambda}>0$ if $\lim_{k\rightarrow {+\infty}}\|e_k\|= {+\infty}$ and $\tilde{\lambda}<0$ if $\lim_{k\rightarrow{+\infty}}\|e_k\|=0$. We denote $\rho:=\exp(\tilde{\lambda})$ which represents the (Lyapunov) convergence rate.
	
	In most literature, the stepsize in \eqref{gd0} is required to satisfy $\alpha_k\in(0,\frac{2}{L})$ to ensure convergence. This requires knowledge of the global Lipschitz constant $L$ which is practically unknown in most cases. Moreover, recent work on large time-varying stepsize schedules has shown that sequences with steps occasionally exceeding the classical bound $\frac{2}{L}$ can still ensure stability and accelerate convergence; see, e.g., \cite{hedging}. 
	
	In this paper, we consider methods that automatically adapt the stepsize based on local smoothness. For example, \cite{adaptive_without_descent} propose
	\begin{equation}\label{adap0}
		\alpha_k=\min\left\{\sqrt{1+\frac{\alpha_{k-1}}{\alpha_{k-2}}}\alpha_{k-1},\frac{1}{2L_k}\right\}   
	\end{equation}
	wherein $\sqrt{1+\frac{\alpha_{k-1}}{\alpha_{k-2}}}\alpha_{k-1}$ controls the growth rate of $\alpha_{k}$ to ensure stability, and the term
	\begin{equation}\label{eq:Lk}
		L_k:=\frac{\|\nabla f(x_k)- \nabla f(x_{k-1})\|}{\|x_k-x_{k-1}\|}
	\end{equation}
	estimates the local smoothness of the gradient.
	
	We aim to deepen our understanding of the behavior of local estimates $L_k$, and possibly use it to obtain optimal stepsizes.
	In order to achieve analytical insight, we focus on quadratic cost functions.
	We note that this case is relevant to smooth convex optimization,
	too, because the cost function is approximately quadratic near the points of minimum. 
	
	The general quadratic optimization problem is given by 
	\begin{equation} \label{problem1}
		\min_{x\in\mathbb{R}^n}\quad f(x):=\frac{1}{2}x^{\top}Hx+b^{\top}x+c
	\end{equation}
	where $H\in\mathbb{R}^{n\times n}$ is a symmetric positive definite matrix, $b\in\mathbb{R}^n$, and $c\in\mathbb{R}$. 
	Without loss of generality, we make the change of variable $$\xi:=Q(x-x^*)$$ where $Q\in\mathbb{R}^{n\times n}$ and $x^*=-H^{-1}b$, and consider the equivalent concise problem
	\begin{equation}\label{problem2}
		\min_{\xi\in\mathbb{R}^n} \quad f(\xi):=\frac{1}{2}\xi^{\top}\Lambda\xi,
	\end{equation}
	where $\Lambda=Q^{\top} H Q$ is diagonal and we write it as
	\begin{equation}\label{4_0}
		\Lambda=
		\begin{bmatrix}
			\lambda_1 &        &        \\
			& \ddots &        \\
			&        & \lambda_n
		\end{bmatrix}.
	\end{equation}
	The terms $\{\lambda_i\}_{i=1}^n$ are the eigenvalues of $H$.
	Without loss of generality,
	we sort them in non-descending order $\lambda_1\leq\lambda_2\cdots\leq\lambda_n$ and denote minimal and maximal eigenvalues respectively as $\mu:= \lambda_1$ and $L:=\lambda_n$. The gradient of cost function $f(\xi)$ is $\nabla f(\xi_k)=\Lambda \xi_k$ and the recursive sequence $\xi_k$ generated by gradient descent reads
	\begin{equation}\label{gd}
		\begin{aligned}
			\xi_{k+1}&=\xi_k-\alpha_k\nabla f(\xi_k)\\
			&=(I-\alpha_k\Lambda)\xi_k,
		\end{aligned}
	\end{equation}
	wherein the $i$-th element of $\xi_{k}$ is computed as $\xi_{k}^i=\left( \prod_{t=0}^{k-1}(1-\alpha_t\lambda_i) \right)\xi_0^i$ for $i=1,2,...,n$.
	
	Following this line, our motivation is to utilize the local curvature estimates for problem \eqref{problem2}
	\begin{equation}\label{L_k}
		L_k=\frac{\|\nabla f(\xi_k)- \nabla f(\xi_{k-1})\|}{\|\xi_k-\xi_{k-1}\|}
		=\frac{\|\Lambda(\xi_k-\xi_{k-1})\|}{\|\xi_k-\xi_{k-1}\|}
	\end{equation}
	and design an adaptive stepsize law to achieve the optimal constant stepsize. Note that~\eqref{4_0} and~\eqref{L_k} imply
	\begin{equation}\label{9}
		\mu\leq L_k\leq L, \quad\forall \xi_{k-1},\xi_k.
	\end{equation}
	
	\section{Preliminaries}\label{sec:preliminaries}
		
	Let us first consider a constant stepsize $\alpha_k\equiv\alpha$.
	Since this determines the convergence of~\eqref{gd0}, we write the Lyapunov convergence rate as a function of stepsize, i.e., $\rho(\alpha)$.
	We characterize $\rho(\alpha)$ in the following proposition.
	\begin{prop}\label{prop1}
		The Lyapunov convergence rate of gradient descent~\eqref{gd0} for problem~\eqref{problem2} with stepsize $\alpha>0$ is given by
		\begin{equation}
			\rho(\alpha)=
			\left\{
			\begin{aligned}
				1-\mu\alpha, \quad & \text{if} \,\, \alpha\in\left(0,\frac{2}{L+\mu}\right)\\
				\frac{L-\mu}{L+\mu},\quad     &\text{if} \,\, \alpha=\frac{2}{L+\mu}\\
				L\alpha-1, \quad & \text{if}\,\, \alpha>\frac{2}{L+\mu}.
			\end{aligned}
			\right.
		\end{equation}
	\end{prop}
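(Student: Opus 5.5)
With a constant stepsize the dynamics decouple along the eigen-directions, so the plan is to compute $\tilde{\lambda}$ directly from the explicit expression $\xi_k^i=(1-\alpha\lambda_i)^k\xi_0^i$ and then reduce to a one-dimensional maximization over the spectrum. Since $\|\xi_k\|=\|Q(x_k-x^*)\|$ and $Q$ is invertible, $\|e_k\|$ and $\|\xi_k\|$ differ by at most constant factors. These factors disappear after taking $\frac{1}{k}\ln$, so it suffices to compute $\limsup_k \frac{1}{k}\ln\|\xi_k\|$.

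\textbf{Step 1.} Let $r_i:=|1-\alpha\lambda_i|$ and $r:=\max_i r_i$. We have
\[
\|\xi_k\|^2=\sum_{i=1}^n r_i^{2k}(\xi_0^i)^2 .
\]
This yields a sandwich. If $i^\star$ attains the maximum and $\xi_0^{i^\star}\neq 0$, then
\[
r^{k}|\xi_0^{i^\star}|\le\|\xi_k\|\le r^k\|\xi_0\|.
\]
Taking $\frac{1}{k}\ln$ and letting $k\to\infty$ gives $\tilde{\lambda}=\ln r$, hence $\rho(\alpha)=\max_i|1-\alpha\lambda_i|$. I will state explicitly that this is the rate for a generic initial condition, namely one with a nonzero component along the extremal eigenvector(s), or equivalently the worst case over $\xi_0$. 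This is the only subtle point, since a special $\xi_0$ could give a faster rate. A degenerate subcase is $r=0$, which can happen only if $\mu=L$ and $\alpha=1/L$. Then the iterate reaches the optimum in finite time and $\rho=0$, which still matches the formula.

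\textbf{Step 2.} The map $\lambda\mapsto|1-\alpha\lambda|$ is convex on $[\mu,L]$, so its maximum over the eigenvalues is attained at an endpoint. Therefore
\[
\rho(\alpha)=\max\{|1-\alpha\mu|,\ |1-\alpha L|\}.
\]

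\textbf{Step 3.} Compare the two endpoint values, splitting at $\alpha=\frac{2}{L+\mu}$.
\begin{itemize}
\item[(a)] If $\alpha<\frac{2}{L+\mu}$, then $\alpha\mu<1$, since $\alpha\mu<\frac{2\mu}{L+\mu}\le 1$. So $|1-\alpha\mu|=1-\alpha\mu$. Moreover $1-\alpha\mu\ge |1-\alpha L|$: this is trivial when $\alpha L\le1$, and otherwise it is equivalent to $1-\alpha\mu\ge\alpha L-1$, i.e.\ $\alpha\le\frac{2}{L+\mu}$. Hence $\rho(\alpha)=1-\alpha\mu$.
\item[(b)] If $\alpha>\frac{2}{L+\mu}$, then $\alpha L>1$. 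Using the same inequality reversed, $\alpha L-1>|1-\alpha\mu|$, where both signs of $1-\alpha\mu$ need to be checked. If $\alpha\mu\ge1$, the claim reduces to $\alpha L-1\ge\alpha\mu-1$, which holds because $L\ge\mu$. Hence $\rho(\alpha)=L\alpha-1$.
\item[(c)] If $\alpha=\frac{2}{L+\mu}$, the two endpoint values coincide and equal $\frac{L-\mu}{L+\mu}$.
\end{itemize}

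The main obstacle is the rigorous handling of Step 1: the $\limsup$ definition and its dependence on $\xi_0$. I will settle it by the sandwich bound above and by noting the genericity assumption explicitly.
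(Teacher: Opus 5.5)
Your proof is correct and takes the same approach as the paper. The paper just asserts that the rate is the slowest mode $\max\{|1-\alpha\mu|,|1-\alpha L|\}$ of the decoupled iteration, and your sandwich bound, convexity reduction to the endpoints, and case split supply the omitted details, including the genericity condition on $\xi_0$ (a nonzero component along an extremal eigendirection) that the paper leaves implicit but which the equality genuinely requires.
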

	
	\begin{pf}
		In light of~\eqref{gd},
		the claim follows by computing the slowest mode (eigenvalue) as $\max\{|1-\alpha \mu|,|1-\alpha L|\}$. \qed
	\end{pf}
    
    \begin{figure}[t]
		\centering
		{\includegraphics[scale=0.4]{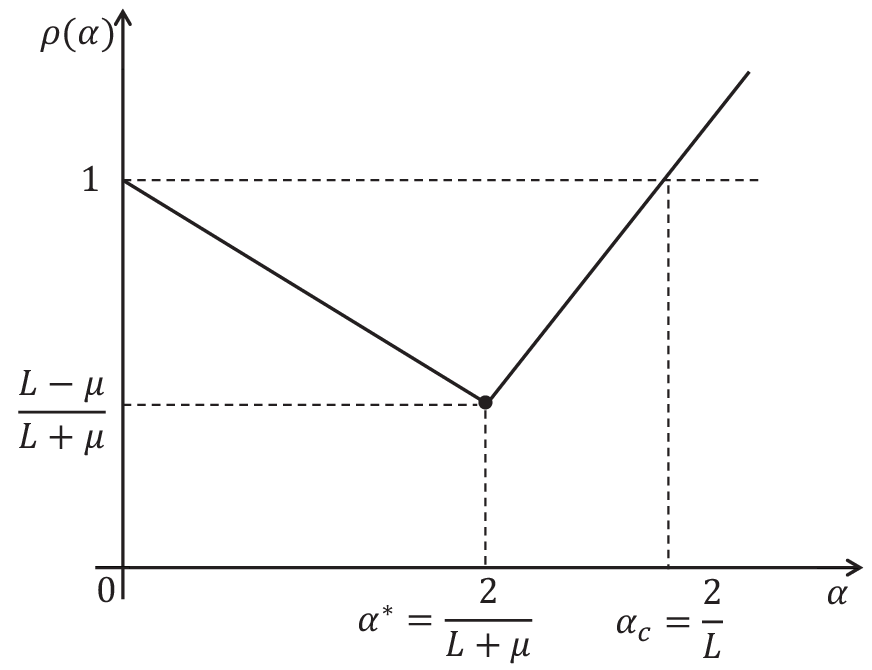}}
		\caption{Illustration of Lyapunov convergence rate $\rho(\alpha)$.}\label{lyapunov_comp}
	\end{figure}
    
	A graphic illustration of Proposition \eqref{prop1} is given in Fig.~\ref{lyapunov_comp}. 
	We denote the optimal (fastest) constant stepsize by 
	\begin{equation}\label{eq:alpha-star}
		\alpha^*=\frac{2}{L+\mu}
	\end{equation}
	and the critical one for stability,
    that ensures $\rho(\alpha) < 1$, by
	\begin{equation}\label{eq:alpha-c}
		\alpha_c=\frac{2}{L}.
	\end{equation}
	In particular, $\xi\not\to0$ if $\alpha\ge\alpha_c$; see Fig.~\ref{lyapunov_comp}.
	We now turn to local smoothness estimate~\eqref{eq:Lk} 	which we denote by $L_k(\alpha)$ with constant stepsize $\alpha$.
	The following result holds.
	\begin{prop}\label{prop2}
		If $\xi_{0}^i\neq 0$ for all $i=1,2,...,n$, then
		\begin{equation}
			L(\alpha):=\lim_{k\rightarrow{+\infty}}L_k(\alpha)=
			\left\{
			\begin{aligned}
				\mu, & \quad \text{if}\,\, \alpha\in(0,\alpha^*)\\
				L, & \quad \text{if}\,\, \alpha>\alpha^*.
			\end{aligned}
			\right.
		\end{equation}
	\end{prop}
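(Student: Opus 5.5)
We compute $L_k(\alpha)$ explicitly in terms of the modes of \eqref{gd}. With constant stepsize $\alpha$ we have $\xi_k^i=(1-\alpha\lambda_i)^k\xi_0^i$. Hence
\begin{equation*}
\xi_k^i-\xi_{k-1}^i=(1-\alpha\lambda_i)^{k-1}\bigl((1-\alpha\lambda_i)-1\bigr)\xi_0^i=-\alpha\lambda_i(1-\alpha\lambda_i)^{k-1}\xi_0^i .
\end{equation*}
Writing $d_k:=\xi_k-\xi_{k-1}$ and setting $c_i:=\alpha\lambda_i\xi_0^i\neq 0$ (by the hypothesis $\xi_0^i\neq0$ and $\lambda_i>0$), we get
\begin{equation*}
L_k(\alpha)^2=\frac{\sum_{i=1}^n\lambda_i^2 c_i^2 (1-\alpha\lambda_i)^{2(k-1)}}{\sum_{i=1}^n c_i^2(1-\alpha\lambda_i)^{2(k-1)}} .
\end{equation*}
This is a weighted average of the $\lambda_i^2$. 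The weights are $w_i^{(k)}\propto c_i^2 r_i^{2(k-1)}$, with $r_i:=|1-\alpha\lambda_i|$.

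The limit therefore reduces to identifying which indices carry the largest $r_i$. As in the proof of Proposition~\ref{prop1}, $r(\lambda)=|1-\alpha\lambda|$ is convex (V-shaped) in $\lambda$. Over $[\mu,L]$ its maximum is attained only at the endpoints, so $\max_i r_i=\max\{|1-\alpha\mu|,|1-\alpha L|\}$.

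For $\alpha<\alpha^*$ we have $|1-\alpha\mu|>|1-\alpha L|$. Both cases work: if $\alpha L\le1$, then $r$ is decreasing on $[\mu,L]$; otherwise $1-\alpha\mu>\alpha L-1$ is exactly $\alpha<2/(L+\mu)$. Moreover every $\lambda_i>\mu$ satisfies $r_i<r_1$, strictly. Symmetrically, for $\alpha>\alpha^*$ the maximum is attained only at the indices with $\lambda_i=L$, and all other indices have strictly smaller $r_i$.

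Let $\mathcal I$ be the set of maximizing indices. Dividing the numerator and denominator by $(\max_i r_i)^{2(k-1)}$, the non-maximizing terms decay geometrically to $0$. The maximizing terms contribute constant weights $c_i^2>0$. Hence $L_k(\alpha)^2\to \lambda_{\mathcal I}^2$, where $\lambda_{\mathcal I}$ is $\mu$ or $L$ respectively, since all $\lambda_i$ with $i\in\mathcal I$ coincide. Taking square roots gives the claim.

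The hypothesis $\xi_0^i\ne0$ is used precisely to guarantee that the dominant weights are nonzero; otherwise a smaller eigenvalue could dominate.

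The main obstacle lies in the degenerate cases, which need care. First, if some $r_i=0$ (i.e.\ $\alpha=1/\lambda_i$), the $(1-\alpha\lambda_i)^{k-1}$ factor vanishes. This does not affect the argument as long as the maximizing index has $r>0$. That holds because $\max r_i>0$ whenever $\mu<L$: if $\max r_i=0$, then $\alpha\mu=\alpha L=1$. Second, the trivial case $\mu=L$ should be dispatched separately: there $L_k\equiv\mu=L$ whenever $d_k\neq0$. Third, we must ensure $d_k\ne0$ for all $k$, so that $L_k$ is defined. This follows because the dominant component of $d_k$ is nonzero. Beyond these points, the proof is a routine dominant-mode (power-iteration-type) argument.
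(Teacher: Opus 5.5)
Your proof is correct and uses the same dominant-mode argument as the paper. You write $L_k(\alpha)^2$ as a weighted average of the $\lambda_i^2$, with weights $\propto\lambda_i^2(\xi_0^i)^2(1-\alpha\lambda_i)^{2(k-1)}$, and show that the modes with $\lambda_i=\mu$ (for $\alpha<\alpha^*$) or $\lambda_i=L$ (for $\alpha>\alpha^*$) strictly dominate. Your version is more careful than the paper's on several points, but these refine the same route rather than change it: you treat repeated extreme eigenvalues uniformly through $\mathcal I$, verify strict dominance directly, use the correct exponent $2(k-1)$ where the paper writes $2(k-2)$, and check $d_k\neq 0$ so that $L_k$ is well defined.
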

	
	\begin{pf}
		Recall $\mu\leq L_k\leq L$ by \eqref{9}.
		Expanding~\eqref{L_k} yields
		\begin{equation}\label{Lk_expand1}
			\begin{aligned}
				\hspace{-10pt} L_k(\alpha)&=\sqrt{ \frac{\|-\alpha\Lambda^2\xi_{k-1}\|^2}{\|-\alpha\Lambda \xi_{k-1}\|^2} }\\
				&= \sqrt{ \frac{\sum_{i=1}^{n}\lambda_i^4(1-\alpha\lambda_i)^{2(k-2)}\xi_{i,0}^2}{\sum_{i=1}^{n}\lambda_i^2(1-\alpha\lambda_i)^{2(k-2)}\xi_{i,0}^2} }.
			\end{aligned}
		\end{equation}
		According to Proposition~\ref{prop1}, the term $(1-m\alpha)^{2(k-2)}$ is the largest across all $\lambda_i$'s if $\alpha<\alpha^*$.
		If the minimal eigenvalue is unique, i.e., $\mu=\lambda_1<\lambda_2$, we have
		\begin{equation}\label{14}
			\lim_{k\rightarrow{{+\infty}}} \frac{ \lambda_i^2\xi_{i,0}^2(1-\alpha\lambda_i)^{2(k-2)} }{\lambda_1^2\xi_{1,0}^2(1-\alpha\lambda_1)^{2(k-2)}}=0,\quad \forall i\neq 1
		\end{equation}
		and plugging~\eqref{14} into~\eqref{Lk_expand1} at the limit yields
		\begin{equation}
			\lim_{k\rightarrow{{+\infty}}} L_k(\alpha)=\mu.
		\end{equation}
		If the minimal eigenvalue is repeated, i.e., $\exists l\geq 2$ such that $\mu=\lambda_1=\cdots=\lambda_l$, then it holds
		\begin{equation}\label{16}
			\lim_{k\rightarrow{{+\infty}}} \frac{ \lambda_i^2(1-\alpha\lambda_i)^{2(k-2)}\xi_{i,0}^2 }{\lambda_1^2(1-\alpha\lambda_1)^{2(k-2)}(\xi_{1,0}^2+\cdots+\xi_{l,0}^2)}=0
		\end{equation}
		for all $i>l$. 
		Subbing \eqref{16} into \eqref{Lk_expand1}, we similarly conclude that
		\begin{equation}
			\lim_{k\rightarrow{{+\infty}}} L_k(\alpha)=\lambda_1=\mu.
		\end{equation}
		If $\alpha>\alpha^*$, the dominant term in~\eqref{Lk_expand1} is $(1-L\alpha)^{2(k-2)}$ across all $\lambda_i$'s.
		Irrespectively of the algebraic multiplicity of $\lambda_i=L$, it holds
		\begin{equation}
			\lim_{k\rightarrow{{+\infty}}} L_k(\alpha)=\lambda_n=L.
		\end{equation} 
		This completes the proof. \qed
	\end{pf}
	
	\begin{figure}[t]
		\centering
		{\includegraphics[scale=0.4]{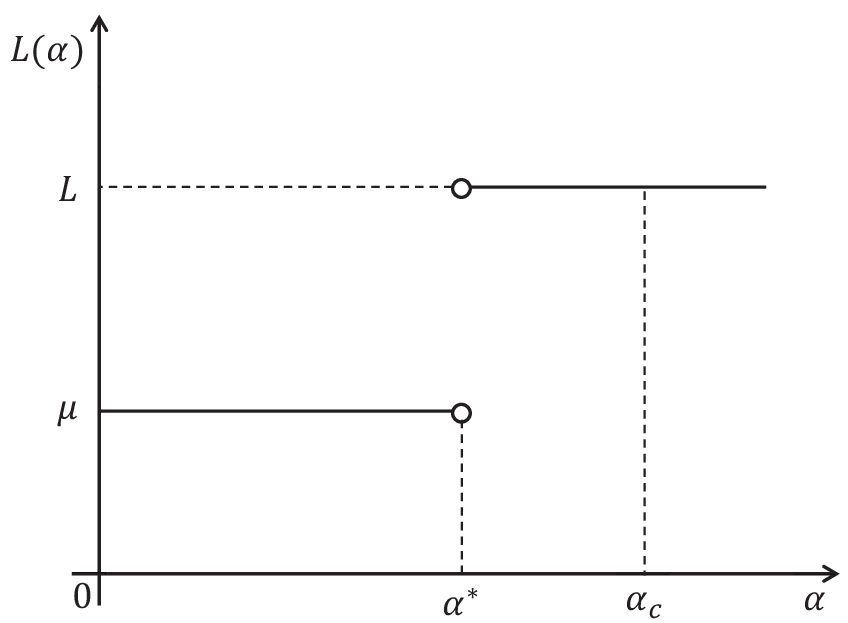}}
		\caption{$L(\alpha)$ under gradient descent with a constant stepsize $\alpha$.}\label{L_k.eps}
	\end{figure}
	
	The following lemma generalizes the previous result under time-varying stepsize $\alpha_k$.
	
	\begin{lem}\label{lem4}
		If there exists $\overline{k}\in\mathbb{N}_{+}$ and $\epsilon> 0$ such that $\xi_{\overline{k}}^i\neq 0$ for $i=1,...,n$ and $\alpha_k\leq \alpha^*-\epsilon$ for all $k>\overline{k}$, then $\lim_{k\rightarrow{+\infty}}L_k(\alpha_k)=\mu$. If $\alpha_k\geq \alpha^*+\epsilon$ for all $k>\overline{k}$,
		then $\lim_{k\rightarrow{+\infty}}L_k(\alpha_k)=L$.
	\end{lem}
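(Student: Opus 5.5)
The plan is to reduce $L_k$ to a weighted average of the $\lambda_i^2$ and then show that the weights concentrate on the dominant mode. This mirrors the proof of Proposition~\ref{prop2}, with the constant factor $(1-\alpha\lambda_i)$ replaced by a product of time-varying factors.

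\emph{Reduction.} By~\eqref{gd}, $\xi_k-\xi_{k-1}=-\alpha_{k-1}\Lambda\xi_{k-1}$. The scalar $\alpha_{k-1}$ cancels in~\eqref{L_k}, so
\begin{equation*}
L_k^2=\frac{\sum_{i=1}^n \lambda_i^4 (\xi_{k-1}^i)^2}{\sum_{i=1}^n \lambda_i^2 (\xi_{k-1}^i)^2},
\qquad
\xi_{k-1}^i=\Big(\prod_{t=\overline{k}}^{k-2}(1-\alpha_t\lambda_i)\Big)\xi_{\overline{k}}^i .
\end{equation*}
Let $j$ denote the dominant eigenvalue: $j=\mu$ in the first case and $j=L$ in the second. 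Divide numerator and denominator by $\sum_{i:\lambda_i=j}\lambda_i^2(\xi_{k-1}^i)^2$. This quantity is nonzero for every $k$, because $\xi_{\overline{k}}^i\neq0$ and the dominant factor $1-\alpha_t j$ never vanishes (checked below). It then suffices to show, for every $i$ with $\lambda_i\neq j$,
\begin{equation*}
\prod_{t=\overline{k}}^{k-2}\frac{|1-\alpha_t\lambda_i|}{|1-\alpha_t j|}\to 0 .
\end{equation*}
Once this holds, $L_k^2\to j^2$ exactly as in~\eqref{14}--\eqref{16}. Repeated eigenvalues are handled the same way, by grouping the indices with $\lambda_i=j$.

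\emph{Case $\alpha_t\ge\alpha^*+\epsilon$.} Since $2L\ge L+\mu$ we have $\alpha^*\ge 1/L$, so $\alpha_t>1/L$ and $|1-\alpha_tL|=\alpha_tL-1>0$. For $\lambda_i<L$ there are two subcases.
\begin{itemize}
\item If $1-\alpha_t\lambda_i\ge 0$: $(\alpha_tL-1)-(1-\alpha_t\lambda_i)\ge \alpha_t(L+\mu)-2\ge\epsilon(L+\mu)$. Hence the ratio $(1-\alpha_t\lambda_i)/(\alpha_tL-1)$ is at most $1-\epsilon(L+\mu)/(\alpha_tL-1)$. For large $\alpha_t$, use instead that $(1-\alpha_t\mu)/(\alpha_tL-1)$ is decreasing in $\alpha_t$; its value at $\alpha^*+\epsilon$ is strictly below $1$.
\item If $1-\alpha_t\lambda_i<0$: the ratio $(\alpha_t\lambda_i-1)/(\alpha_tL-1)$ is increasing in $\alpha_t$ with supremum $\lambda_i/L<1$.
\end{itemize}
This gives a uniform bound $q<1$ on the ratio. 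The product is then at most $q^{k-1-\overline{k}}\to0$, so $L_k\to L$.

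\emph{Case $\alpha_t\le\alpha^*-\epsilon$.} Here $\alpha_t<\alpha^*\le1/\mu$, so $1-\alpha_t\mu>0$. For $\lambda_i>\mu$ there are again two subcases.
\begin{itemize}
\item If $1-\alpha_t\lambda_i<0$: the ratio $(\alpha_t\lambda_i-1)/(1-\alpha_t\mu)$ is increasing in $\alpha_t$, and its value at $\alpha^*-\epsilon$ is strictly below $1$, because $\alpha(\lambda_i+\mu)<\alpha(L+\mu)<2$ there.
\item If $1-\alpha_t\lambda_i\ge0$: the ratio equals $1-\alpha_t(\lambda_i-\mu)/(1-\alpha_t\mu)\le 1-\alpha_t(\lambda_2'-\mu)$, where $\lambda_2'$ is the smallest eigenvalue exceeding $\mu$.
\end{itemize}

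\emph{Main obstacle.} The second subcase tends to $1$ as $\alpha_t\to0$, so no uniform $q<1$ exists from the hypothesis as stated. The product vanishes if and only if $\sum_t\alpha_t=+\infty$. With a summable positive sequence of stepsizes, the ratios of the modes freeze, and $L_k$ converges to some value strictly between $\mu$ and $L$. I would therefore make explicit the implicit requirement that the stepsizes are bounded away from zero, $\alpha_k\ge\delta>0$ (or at least $\sum_k\alpha_k=+\infty$), which the adaptive law introduced later is expected to satisfy. Under this requirement the ratio is bounded by $q=\max\{1-\delta(\lambda_2'-\mu),\,q_\epsilon\}<1$, where $q_\epsilon$ is the bound from the first subcase. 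The product then decays geometrically and $L_k\to\mu$. This degenerate small-stepsize regime is the only step where the argument needs care beyond the constant-stepsize proof of Proposition~\ref{prop2}.
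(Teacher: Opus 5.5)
Your argument follows the paper's route. The paper also divides by the dominant-mode product $r_k^1=\prod_{t=\bar k}^{k-1}(1-\alpha_t\mu)$, shows that the other components become negligible, and passes to the limit in $L_k$. The two differ at the one nontrivial step. The paper obtains \eqref{20_0} from the pointwise inequality $|1-\alpha_t\lambda_i|/|1-\alpha_t\mu|<1$ alone, and disposes of the case $\alpha_k\ge\alpha^*+\epsilon$ with ``similarly''. You instead require a uniform bound $q<1$, and your objection is correct: a pointwise ratio below $1$ does not force the product to zero.

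In fact the first claim of the lemma is false as stated. Take $\mu<L$, $\epsilon=\alpha^*/2$ and $\alpha_t=2^{-t}/(2L)$. Then $\alpha_t\le\alpha^*-\epsilon$ and $1-\alpha_t\lambda_i\ge 3/4$ for all $i$. Since $\sum_t\alpha_t<\infty$, every $\xi_k^i$ converges to a nonzero limit. Hence $L_k^2$ tends to a weighted average of the $\lambda_i^2$ with all weights positive, which lies strictly between $\mu^2$ and $L^2$.

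Your repair is the right one: either $\alpha_k\ge\delta>0$, or $\alpha_k>0$ with $\sum_k\alpha_k=+\infty$. It also excludes nonpositive stepsizes, which the hypothesis as written allows. With it your proof is complete. The second case needs no repair, since the ratios stay uniformly below $1$ on $[\alpha^*+\epsilon,\infty)$ and tend to $\lambda_i/L$, exactly as you show. The extra hypothesis is also harmless downstream. Under AdOGD, $\hat\mu_k\le\hat L_k\le L$ gives $\alpha_k\ge 1/L$, so $\delta=1/L$ works where Theorem~\ref{thm1} invokes the first case. Proposition~\ref{prop3} only uses the second case.

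One small fix remains. The statement bounds $\alpha_k$ only for $k>\bar k$, yet your products, like the paper's, start at $t=\bar k$. Your claim that the dominant factor $1-\alpha_t j$ never vanishes therefore silently uses the bound at $t=\bar k$ as well. Without that bound both claims fail. Choosing $\alpha_{\bar k}=1/\mu$ (resp.\ $\alpha_{\bar k}=1/L$) annihilates the dominant modes at step $\bar k+1$. After that, $L_k\ge\lambda_2'>\mu$ (resp.\ $L_k$ stays at most the largest eigenvalue below $L$). State the stepsize bounds for $k\ge\bar k$.
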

	
	\begin{pf}
		We consider the two cases separately.
		\begin{description}
			\item[Case $\alpha_k\leq \alpha^*-\epsilon$.] Let us rewrite $\xi_k^i$ as 
			\begin{equation}\label{19_0}
				\xi_{k}^i=\prod_{t=\bar{k}}^{k-1}(1-\alpha_t\lambda_i)\xi_{\bar{k}}^i.
			\end{equation}
			Define $r_{k}^i:=\prod_{t=\bar{k}}^{k-1}(1-\alpha_t\lambda_i)$ and $r_k^i=r_k^1$ for $i\in\{1,...,l\}$ where $\lambda_1=\cdots=\lambda_l=\mu$ and $\lambda_{l+1}>\mu$.
			As $\frac{|1-\alpha_k\lambda_i|}{|1-\mu\alpha_k|}<1$ for all $\lambda_i>\mu$ and $k\geq\bar{k}$, it follows
			\begin{equation}\label{20_0}
				\lim_{k\rightarrow {+\infty}}\frac{r_k^i}{r_k^1}=
				\left\{
				\begin{aligned}
					1, &\quad \text{if}\,\, i\in\{1,...,l\}\\
					0, &\quad \text{otherwise}.
				\end{aligned}
				\right.
			\end{equation}
			Combining \eqref{19_0} with \eqref{20_0} yields
			\begin{equation}\label{21_0}
				\begin{aligned}
					\lim_{k\rightarrow{+\infty}}\frac{\xi_k}{r_k^1}\hspace{-1pt}&=\hspace{-1pt}\lim_{k\rightarrow {+\infty}}\left[\xi_{\bar{k}}^1,\xi_{\bar{k}}^2,...,\xi_{\bar{k}}^l,\frac{r_k^{l+1}}{r_k^1} \xi_{\bar{k}}^{l+1},...,\frac{r_k^{n}}{r_k^1} \xi_{\bar{k}}^{n}\right]^{\top}\\
					\hspace{-1pt}&=\hspace{-1pt}\lim_{k\rightarrow {+\infty}}[\xi_{\bar{k}}^1,\xi_{\bar{k}}^2,...,\xi_{\bar{k}}^l,0,...,0]^{\top},
				\end{aligned}
			\end{equation}
			which implies 
			\begin{eqnarray}\label{22_0}
				\lim_{k\rightarrow{+\infty}}\frac{\Lambda \xi_k}{r_k^1}&=& \lim_{k\rightarrow{+\infty}} \mu [\xi_{\bar{k}}^1,\xi_{\bar{k}}^2,...,\xi_{\bar{k}}^l,0,...,0]^{\top}
			\end{eqnarray}
			and 
			\begin{equation}\label{23_0}
				\lim_{k\rightarrow{+\infty}}\frac{\Lambda^2\xi_k}{r_k^1}=\lim_{k\rightarrow{+\infty}}\mu^2[\xi_{\bar{k}}^1,\xi_{\bar{k}}^2,...,\xi_{\bar{k}}^l,0,...,0]^{\top}.
			\end{equation}
			In light of \eqref{L_k}, \eqref{22_0} and \eqref{23_0}, we obtain
			\begin{eqnarray}\label{24_0}
				\lim_{k\rightarrow {+\infty}}L_k &=& \lim_{k\rightarrow {+\infty}} \frac{\|\Lambda(\xi_k-\xi_{k-1})\|}{\|\xi_k-\xi_{k-1}\|}\nonumber\\
				&=& \lim_{k\rightarrow {+\infty}}\frac{\|-\frac{\alpha_{k-1}\Lambda^2\xi_{k-1}}{r_k^1}\|}{\|-\frac{\alpha_{k-1}\Lambda\xi_{k-1}}{r_k^1}\|}=\mu.
			\end{eqnarray}
			\item[Case $\alpha_k\geq \alpha^*+\epsilon$.] Note that $r_k^i=r_k^n$ for all $i\in\{q,...,n\}$ such that $\lambda_i=L$ for some $q\ge l$. 
			Similar to the derivation from \eqref{19_0} to \eqref{24_0},   
			we deduce that
			\begin{eqnarray}
				\lim_{k\rightarrow {+\infty}}L_k =L.
			\end{eqnarray}
		\end{description} 
		The proof is completed.\qed
	\end{pf}
	
	Based on the above preliminaries, one question arises: do we get convergence under an adaptive stepsize law $\alpha_k=\mathcal{F}(L_k,\alpha_{k-1})$ where $\mathcal{F}$ is an update law to be chosen?
    The term $L_k$ seems useful to \emph{estimate} minimal and maximal local curvatures via estimators $\hat{\mu}_k:=\min_{k}\{L_k\}$ and $\hat{L}_k:=\max_k\{L_k\}$ which can be used to design an adaptive stepsize inspired by~\eqref{eq:alpha-star}--\eqref{eq:alpha-c}.
	The choice of $\alpha_k$ is driven by two objectives: (\textit{i}) to stabilize the system; (\textit{ii}) to explore the landscape to get good smoothness estimates.
	In the next section, we present our algorithm to achieve the optimal constant stepsize~\eqref{eq:alpha-star} without knowing $\mu$ and $L$ \emph{a priori}.

    \begin{rem}
        We purposely do not use parameters $L$ and $\mu$ in our design since, although they are known for the simple problem~\eqref{problem2}, their computation for general convex functions---which we aim to target in future work---is intractable.
    \end{rem}
    
	\begin{rem}
		In Proposition \ref{prop2}, we neglect the case $\alpha=\alpha^*$ because it does not affect our convergence analysis.
	\end{rem}
	
	\section{Main Results}\label{sec:main-results}
	Since establishing stability contributes to the convergence analysis, we first offer intuition for the adaptive stepsize
	\begin{equation}\label{adap1}
		\alpha_k=\frac{\beta}{\hat{L}_k},
	\end{equation}
	where $\beta$ is a constant design parameter and 
	\begin{equation}\label{eq:Lmax}
		\hat{L}_k=\max\{\hat{L}_{k-1},L_k\}, \quad \hat{L}_1=L_1.
	\end{equation}
	Note that $\hat{L}_k$ is non-decreasing and $\hat{L}_k\le L$ in view of~\eqref{4_0} and~\eqref{L_k}. By monotonicity and boundedness, the limit $\overline{L}:=\lim_{k\rightarrow{+\infty}}\hat{L}_k$ exists finite.
	
	In the sequel, we provide necessary and sufficient conditions for stability under the adaptive stepsize law~\eqref{adap1}.
	\begin{prop}\label{prop3}
		Let $\alpha_0>0$ and $\xi_{0}^i\neq 0$ for all $i=1,2,...,n$,
		then $\xi_k\rightarrow 0$ if and only if $\beta\in(0,2)$.
	\end{prop}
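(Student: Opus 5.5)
The proof has two directions. Both rest on the fact that $\hat{L}_k$ is non-decreasing with finite limit $\overline{L}\le L$. Hence $\alpha_k=\beta/\hat{L}_k$ is non-increasing and converges to $\overline{\alpha}:=\beta/\overline{L}\ge \beta/L$.

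\emph{Sufficiency ($\beta\in(0,2)$).} First I show that $\overline{L}=L$, arguing by contradiction. Suppose $\overline{L}<L$. Then $\overline{\alpha}>\beta/L$. Since $\hat{L}_k$ is piecewise constant and non-decreasing, and $\hat{L}_k\to\overline{L}$, there is an index $\overline{k}$ after which $\alpha_k\in[\overline{\alpha},\overline{\alpha}+\delta]$ for any prescribed $\delta>0$. I then split according to the size of $\overline{\alpha}$.
\begin{itemize}
\item If $\overline{\alpha}>\alpha^*$, choose $\epsilon=\overline{\alpha}-\alpha^*$. The second case of Lemma~\ref{lem4} gives $L_k\to L$, so $\hat{L}_k\to L$, a contradiction.
\item If $\overline{\alpha}<\alpha^*$, Lemma~\ref{lem4} gives $L_k\to\mu$. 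This alone is no contradiction, so I need a finer argument.
\end{itemize}
In the finer argument I track the amplitude of the mode $\lambda_n=L$, namely $|1-\alpha_k L|$. On the set where $\hat{L}_k$ stays strictly below $L$, I would like to show the top mode is not dominant. This is the obstacle, and I would instead change strategy. The quantity $L_k$ is a weighted root-mean-square of the eigenvalues, with weights $w_i\propto\lambda_i^2(\xi_{k-1}^i)^2$. Hence $L_k\ge\lambda_i$ whenever mode $i$ dominates. Whenever some mode $i$ has $|1-\alpha_k\lambda_i|>1$, that mode grows relative to the others and eventually pushes $L_k$ up to roughly $\lambda_i$. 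This reduces the stepsize below $2/\lambda_i$ only if $\beta<2$.

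For the main argument I define the set $U_k=\{i:\alpha_k\lambda_i\ge 2\}$ of unstable modes. I claim that if $U_k\neq\emptyset$ persists, then the components in $U_k$ grow geometrically while the others do not. Consequently the weights concentrate on $U_k$, so $L_k\ge\min_{i\in U_k}\lambda_i$ eventually. Then $\alpha_{k}\le\beta/\lambda_i<2/\lambda_i$, which removes $i$ from $U_k$. Since $\alpha_k$ is non-increasing, once a mode leaves $U_k$ it never returns. There are finitely many modes, so there is a $K$ with $U_k=\emptyset$ for all $k\ge K$, i.e. $\alpha_k\lambda_i<2$ for every $i$. It remains to get a uniform contraction. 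I use $\alpha_k\ge\beta/L>0$, so $|1-\alpha_k\lambda_i|\le\max\{1-\beta\mu/L,\ \beta\lambda_i/\hat{L}_k-1\}$. The second term is below $1$ uniformly once $\hat{L}_k\ge\lambda_i\beta/(2-\eta)$ for some $\eta>0$. For modes still violating this, the same domination argument applies with the margin $\eta$: by the concentration argument such a mode eventually drives $\hat{L}_k$ above the threshold, because $\beta<2$ leaves room. Hence each factor is eventually bounded by some $\rho_0<1$, and $\xi_k\to0$. The delicate step is the concentration claim, which needs the initial nonzero components (propagated via $\alpha_k\lambda_i\neq1$ generically, or handled separately) and a ratio argument in the style of \eqref{20_0}.

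\emph{Necessity.} Suppose $\beta\ge2$. Since $\hat{L}_k\le L$, we have $\alpha_k\ge\beta/L\ge2/L$ for all $k$. Then $|1-\alpha_kL|=\alpha_kL-1\ge1$, so $|\xi_k^n|=\prod_t|1-\alpha_tL|\,|\xi_0^n|\ge|\xi_0^n|>0$, and $\xi_k\not\to0$. For $k=0$ the stepsize is $\alpha_0$, which could make the first factor vanish; I would handle this by noting that the hypothesis presumably intends nonzero factors, or by starting the product at $k=1$ with $\xi_1^n\neq0$ unless $\alpha_0=1/L$.
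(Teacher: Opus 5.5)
Your necessity argument is correct and stronger than the paper's. The paper exhibits a single failing instance (scalar, $\mu=L=1$, $\alpha_0\neq 1$). Your bound $\alpha_k\ge\beta/L\ge 2/L$ for $k\ge 1$ instead gives $|\xi_k^n|\ge|\xi_1^n|>0$ for every instance with $\alpha_0\neq 1/L$. That exception is real: if $\Lambda=LI$ and $\alpha_0=1/L$, then $\xi_1=0$ for every $\beta$, which is why the paper insists on $\alpha_0\neq 1$. The sufficiency skeleton is also sound, but two of its steps do not hold as written.

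First, the target $\overline{L}=L$ of your opening argument is false in general, so that route cannot be completed. Take $\Lambda=\mathrm{diag}(1,9)$, $\beta=0.1$, $\alpha_0=0.1$. Then $\alpha_k\le\beta/\mu=0.1<\alpha^*=0.2$ for all $k$, and the ratio $|1-9\alpha_k|/(1-\alpha_k)$ is below $1$ at every step. Hence $L_k$ is strictly decreasing and $\hat{L}_k\equiv L_1<L$. The case $\overline{\alpha}<\alpha^*$ is therefore not an obstacle needing a finer argument. It is a harmless outcome, because it already gives $\overline{\alpha}<\alpha_c$. That inequality, not $\overline{L}=L$, is what you need, and your first bullet delivers it. If $\overline{\alpha}>\alpha^*$, Lemma~\ref{lem4} forces $\overline{L}=L$, so $\overline{\alpha}=\beta/L<\alpha_c$. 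Otherwise $\overline{\alpha}\le\alpha^*<\alpha_c$. Either way some $\alpha_K<\alpha_c$.

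Second, your uniform-contraction step rests on a claim your concentration argument does not support. That argument separates factors $\ge 1$ (modes in $U_k$) from factors bounded by some $\rho_0<1$. A mode with $2-\eta<\alpha_k\lambda_i<2$ only has factor in $(1-\eta,1)$. For general $\eta$, the $\mu$-mode, with factor $1-\alpha_k\mu$, may decay more slowly; then the weights concentrate on $\mu$ and $\hat{L}_k$ is never pushed above your threshold. A rescue would need $\eta<\min\{2-\beta,\,2\mu/(L+\mu)\}$, so that a persistent violator forces $\alpha_k>\alpha^*$, and then Lemma~\ref{lem4} rather than your comparison. The step is unnecessary anyway. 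Since $\alpha_k$ is non-increasing, once $\alpha_K<\alpha_c$ every factor for $k\ge K$ is at most $\max\{1-\beta\mu/L,\ \alpha_K L-1\}<1$. This is the first half of the paper's proof.

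With these repairs, your $U_k$ argument is a valid variant of the paper's second half. Since $U_k\neq\emptyset$ if and only if $\alpha_k\ge\alpha_c$, a persisting $U_k$ is the paper's case $\alpha_k\ge\alpha_c$ for all large $k$. The paper closes that case in one stroke with Lemma~\ref{lem4}: as $\alpha_c>\alpha^*$, the top mode dominates, $L_k\to L$, and $\alpha_k\to\beta/L<\alpha_c$. Your comparison of unstable against uniformly stable modes is self-contained and avoids the $\alpha^*$ threshold. However, it yields only $\liminf_k L_k\ge\min_{i\in U}\lambda_i$, not $L_k\ge\min_{i\in U}\lambda_i$, with $\beta<2$ absorbing the slack. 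The mode-by-mode peeling is also superfluous: $U_k$ is monotone, hence eventually constant, so one contradiction at its smallest eigenvalue suffices. Finally, nonvanishing is not a genericity issue. On $U_k$ one has $\alpha_k\lambda_i\ge 2$, so only $\alpha_0$ can annihilate such a component, a point the paper also glosses over.
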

	\begin{pf}
		We prove it by contradiction in two steps.
		\begin{description}
			\item[Sufficiency ($\beta\in(0,2)\implies\xi_k\rightarrow{0}$).] Suppose that $\xi_k\nrightarrow 0$ if $\beta\in(0,2)$. We first claim that $\xi_k\nrightarrow 0$ implies that there exists $\overline{k}\in\mathbb{N}_+$ such that $\alpha_k\geq \alpha_c$ for all $k>\overline{k}$. If this is not true, then there exists $\overline{k}\in\mathbb{N}_+$ and $k>\overline{k}$ such that $\alpha_k<\alpha_c$. Noting that $\alpha_k$ is monotonically non-increasing, then the opposite statement becomes there exist $\epsilon>0$ and $\overline{k}\in\mathbb{N}_+$ such that $\alpha_k\leq\alpha_c-\epsilon$ for all $k>\overline{k}$. It follows that
			\begin{eqnarray}
				\lim_{k\rightarrow{+\infty}}\xi_k^i&=&\lim_{k\rightarrow{+\infty}}\prod_{t=\overline{k}}^{k-1}(1-\alpha_t\lambda_i)\xi_{\overline{k}}^i=0,
			\end{eqnarray}
			because $|1-\alpha_t\lambda_i|<1$ for all $i=1,...,n$, and $|\xi_{\overline{k}}^i|<+\infty$. This observation contradicts $\xi_k\nrightarrow0$, and thus, the statement that $\xi_k\nrightarrow 0$ means $\alpha_k\geq \alpha_c$ for all $k>\overline{k}$ and $\overline{k}\in\mathbb{N}_+$ is true. 
			Since $\alpha_c=\frac{2}{L}>\frac{2}{L+\mu}=\alpha^*$, there exists an $\epsilon>0$ such that $\alpha_c\geq \alpha^*+\epsilon$. 
			According to Lemma~\ref{lem4}, $\alpha_k\geq \alpha_c$ further implies that $\lim_{k\rightarrow {+\infty}}L_k=L$. By $\hat{L}_k:=\max\{\hat{L}_{k-1},L_k\}$, we have
			\begin{equation}
				\lim_{k\rightarrow{+\infty}}\hat{L}_k=\overline{L}=L
			\end{equation}
			and the stepsize updates \eqref{adap1} have limit
			\begin{equation}
				\lim_{k\rightarrow {+\infty}}\alpha_k= \lim_{k\rightarrow {+\infty}}\frac{\beta}{\hat{L}_k}=\frac{\beta}{L}<\alpha_c,
			\end{equation}
			which contradicts the hypothesis $\alpha_k\geq \alpha_c$ for all $k>\overline{k}$ if $\beta\in(0,2)$.
			\item[Necessity ($\xi_k\rightarrow{0}\implies\beta\in(0,2)$).] Suppose that there exists $\beta\geq 2$ such that $\xi_k\rightarrow{0}$ for all $\xi_0^i\neq 0$.
			Let us consider a scalar system with $\mu=L=1$ and initial conditions $\xi_0=1$, $\alpha_0\neq 1$.
			Then, the iterates $\xi_k$ read
			\begin{equation}
				\xi_k=(1-\beta)\xi_{k-1}=(1-\beta)^{k-1}(1-\alpha_0),
			\end{equation}
			which implies that $\xi_k$ either oscillates if $\beta=2$ or diverges if $\beta>2$, contradicting the hypothesis.
		\end{description}
		This completes the proof. \qed
	\end{pf}
	
	Proposition~\ref{prop3} answers the first question about stability requirement under an adaptive stepsize law like \eqref{adap1} and offers an intuitive direction to utilize the non-decreasing monotonicity of $\overline{L}_k$ for stabilizing stepsizes.
	
	Now, we are in the position to explore an adaptive rule that converges to the optimal constant stepsize $\alpha^*$ in~\eqref{eq:alpha-star}.
	Let us introduce the minimal local curvature estimator
	\begin{equation}\label{eq:Lmin}
		\hat{\mu}_k=\min_{k}\{L_k\} = \min\{\hat{\mu}_{k-1},L_k\}, \quad \hat{\mu}_1 =L_1.
	\end{equation}
	Since it is non-increasing and lower-bounded by $\mu$,
    the limit $\overline{\mu}:=\lim_{k\rightarrow {+\infty}}\hat{\mu}_k$ exists finite analogous to $\overline{L}$.
	We introduce the following stepsize which,
	together with~\eqref{gd}, we name adaptive optimal gradient descent (AdOGD):
	\begin{equation}\label{adap2}
		\alpha_k=\frac{2}{\hat{\mu}_k+\hat{L}_k}.
	\end{equation}
	The limit of $\alpha_k$ exists because $\hat{\mu}_k$ and $\hat{L}_k$ converge to their limits $\overline{\mu}$ and $\overline{L}$, respectively.
	Also,
	it holds $$\mu\leq\overline{\mu}\leq \hat{\mu}_k\leq\hat{L}_k\leq\overline{L}\leq L.$$
	
	We next present our main convergence result for AdOGD.
	
	\begin{thm}\label{thm1}
		Under the adaptive stepsize law AdOGD, if $\xi_{0}^i\neq 0$ for $i=1,2,...,n$, then $\hat{\alpha}:=\lim_{k\rightarrow{+\infty}}\alpha_{k}=\alpha^*$.
	\end{thm}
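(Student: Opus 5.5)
Since $\hat{\mu}_k$ is non-increasing and $\hat{L}_k$ is non-decreasing, $\alpha_k$ is non-increasing and converges to $\hat{\alpha}=2/(\overline{\mu}+\overline{L})\ge 2/(L+\mu)=\alpha^*$. The task is therefore to prove $\overline{\mu}=\mu$ and $\overline{L}=L$. Equality $\hat\alpha=\alpha^*$ holds exactly when $\overline{\mu}+\overline{L}=\mu+L$, and since $\overline{\mu}\ge\mu$, $\overline{L}\le L$, this forces both equalities. I would argue by contradiction, splitting according to where $\hat\alpha$ sits relative to $\alpha^*$, and using Lemma~\ref{lem4} to recover the missing extremal curvature.

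First, a preliminary step: the hypothesis $\xi_k^i\neq0$ of Lemma~\ref{lem4} must hold at some $\overline{k}$. Since $\xi_k^i=\prod_t(1-\alpha_t\lambda_i)\xi_0^i$, a component vanishes only if $\alpha_t=1/\lambda_i$ exactly for some $t$. I would treat this as a nongeneric event. One option is to assume it away, mirroring the paper's treatment of $\alpha=\alpha^*$. The other is to note that if a component is killed the effective problem has fewer eigenvalues, and handle it separately. I would flag this but not dwell on it.

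Main case: suppose $\hat\alpha>\alpha^*$, i.e.\ $\overline{\mu}+\overline{L}<\mu+L$. Set $\epsilon=(\hat\alpha-\alpha^*)/2>0$. Because $\alpha_k$ decreases to $\hat\alpha$, we have $\alpha_k\ge\hat\alpha\ge\alpha^*+\epsilon$ for all $k$. Lemma~\ref{lem4} then gives $L_k\to L$, so $\overline{L}=L$ since $\hat L_k\ge L_k$. This forces $\overline{\mu}<\mu$, contradicting $\overline{\mu}\ge\mu$. Hence $\hat\alpha=\alpha^*$.

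The main obstacle is that this argument seems to close too easily, and I would check it carefully. The lower bound $\hat\alpha\ge\alpha^*$ comes for free from the bounds $\mu\le\overline{\mu}$ and $\overline{L}\le L$ together with the monotonicity of $\alpha_k$. So no "$\hat\alpha<\alpha^*$" case ever arises, and Lemma~\ref{lem4} is needed only once, in the strict inequality case. The delicate points are therefore the following.
\begin{itemize}
\item Verifying that Lemma~\ref{lem4} applies with the nonvanishing condition. Only the components with $\lambda_i=L$ need to be nonzero for $L_k\to L$, which weakens the requirement.
\item Confirming that $\alpha_k\ge\alpha^*+\epsilon$ holds uniformly. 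This follows from monotonicity.
\end{itemize}
If the nonvanishing issue resists, I would prove the slightly weaker statement that it suffices for $\xi_{\overline k}^n\neq0$ (components with eigenvalue $L$). Then $\alpha_t=1/L$ can occur at most at one index where $\alpha_t$ strictly passes $1/L$. Since $\alpha_t\ge\alpha^*>1/L$ along the whole trajectory, this cannot happen, so those components never vanish and the argument closes.
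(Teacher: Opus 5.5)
Your proof has a genuine gap, caused by a sign error in the first step. The bounds $\overline{\mu}\ge\mu$ and $\overline{L}\le L$ push the denominator of $\hat{\alpha}=2/(\overline{\mu}+\overline{L})$ in \emph{opposite} directions, so they do not give $\overline{\mu}+\overline{L}\le\mu+L$. The bounds alone allow, e.g., $\overline{\mu}=\overline{L}=L$, i.e.\ $\hat{\alpha}=1/L<\alpha^*$. For the same reason $\alpha_k=2/(\hat{\mu}_k+\hat{L}_k)$ is not monotone: a drop of $\hat{\mu}_k$ increases $\alpha_k$, and a rise of $\hat{L}_k$ decreases it. Monotonicity holds for the law $\beta/\hat{L}_k$ of Proposition~\ref{prop3}, not for AdOGD. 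Your reduction of the theorem to $\overline{\mu}=\mu$ and $\overline{L}=L$ is also wrong. Only $\overline{\mu}+\overline{L}=\mu+L$ is required, and the paper's numerical study reports precisely $\hat{\mu}_k\not\to\mu$ and $\hat{L}_k\not\to L$ while $\alpha_k\to\alpha^*$. Consequently the case $\hat{\alpha}<\alpha^*$ is not vacuous, and your argument never excludes it.

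Closing it requires the first half of Lemma~\ref{lem4}, which is the paper's first case. If $\hat{\alpha}<\alpha^*$, convergence of $\alpha_k$ gives $\alpha_k\le\alpha^*-\epsilon$ for all $k>\overline{k}$. Hence $L_k\to\mu$, so $\overline{\mu}=\mu$ and $\hat{\alpha}=2/(\mu+\overline{L})\ge 2/(\mu+L)=\alpha^*$ because $\overline{L}\le L$, a contradiction.

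Your treatment of $\hat{\alpha}>\alpha^*$ is the paper's second case. However, ``$\alpha_k\ge\hat{\alpha}$ for all $k$'' must be replaced by ``$\alpha_k\ge\hat{\alpha}-\epsilon=\alpha^*+\epsilon$ for all $k>\overline{k}$'', which uses only convergence.

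Your nonvanishing argument likewise rests on the false bound $\alpha_t\ge\alpha^*$; already $\alpha_1=1/L_1<\alpha^*$ whenever $L_1>(\mu+L)/2$. It can be repaired. If $\mu<L$, every entry of $\xi_1-\xi_0=-\alpha_0\Lambda\xi_0$ is nonzero, so $\mu<L_1<L$. Then $\hat{\mu}_t\le L_1<L$ and $\hat{L}_t\ge L_1>\mu$ give $1/L<\alpha_t<1/\mu$ for all $t\ge1$. So once the $\mu$- and $L$-components of $\xi_1$ are nonzero, they stay nonzero forever. This holds provided $\alpha_0\notin\{1/\mu,1/L\}$, the nongeneric event you flagged. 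That is exactly what the proof of Lemma~\ref{lem4} uses in each case---a point the paper's proof leaves implicit.
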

	\begin{pf}
		The proof is by contradiction. 
		Suppose $\hat{\alpha}\neq\alpha^*$, i.e., either $\hat{\alpha}\in(0,\alpha^*)$ or $\hat{\alpha}>\alpha^*$. 
		\begin{description}
			\item[Case $\hat{\alpha}\in(0,\alpha^*)$.] By hypothesis,
			$\exists\overline{k}\in\mathbb{N}_+$ such that
			\begin{equation}\label{34}
				\alpha_k\leq\frac{\alpha^*+\hat{\alpha}}{2}<\alpha^*, \quad\forall k>\overline{k}.
			\end{equation}
			Equivalently, we rewrite \eqref{34} as 
			\begin{equation}
				\alpha_k\leq\alpha^*-\frac{\alpha^*-\hat{\alpha}}{2}\leq\alpha^*-\epsilon, \quad\forall k>\overline{k},
			\end{equation}
			for some positive constant $\epsilon\leq \frac{\alpha^*-\hat{\alpha}}{2}$.
			According to Lemma~\ref{lem4}, it holds $\lim_{k\rightarrow{+\infty}}L_k=\mu$ and, by rule~\eqref{eq:Lmin},
			\begin{equation}
				\lim_{k\rightarrow{+\infty}}\hat{\mu}_k=\mu.
			\end{equation}
			It follows that
			\begin{equation}
				\lim_{k\rightarrow {+\infty}} \alpha_k=\frac{2}{\mu+\overline{L}}\geq \frac{2}{\mu+L}= \alpha^*
			\end{equation}
			which contradicts the hypothesis $\hat{\alpha}\in(0,\alpha^*)$.
			\item[Case $\hat{\alpha}>\alpha^*$.] Similarly to the previous case,
			by using Lemma~\ref{lem4} and monotonicity of rule~\eqref{eq:Lmax}, we derive $\lim_{k\rightarrow{+\infty}}L_k=L$ and $
				\lim_{k\rightarrow{+\infty}}\hat{L}_k=L$,
			which yields
			\begin{equation}
				\lim_{k\rightarrow {+\infty}} \alpha_k=\frac{2}{\overline{\mu}+L}\leq \frac{2}{\mu+L}= \alpha^*.
			\end{equation}
			This contradicts the hypothesis $\hat{\alpha}>\alpha^*$.
		\end{description}
		The previous two cases prove that $\alpha_k$ under the adaptive stepsize law AdOGD converges to $\alpha^*$. \qed
	\end{pf}
	
	\section{Numerical Experiments}\label{sec:numerical-study}
	We compare the performance of our proposed approach with some state-of-the-art adaptive algorithms.
	Consider the quadratic optimization problem~\eqref{problem1} where $x\in\mathbb{R}^5$ and the eigenvalues of $\Lambda$ are linearly distributed between $\mu=1$ and $L=9$. 
	We compare our adaptive stepsize~\eqref{adap2} with three recent algorithms:
	AdGD by \cite{adaptive_without_descent} shown in \eqref{adap0};
	AFFGD by \cite{iannelli2025};
	AdaGM by \cite{adgd_proximal2}.
    Simulation results are reported in Fig.~\ref{integrated_fig3}.

    \begin{figure}[htbp]
        \centering
        \subfloat[Stepsizes with the compared adaptive strategies.]{
            \begin{minipage}[h]{0.95\linewidth}
                \centering
                \includegraphics[width=.8\textwidth]{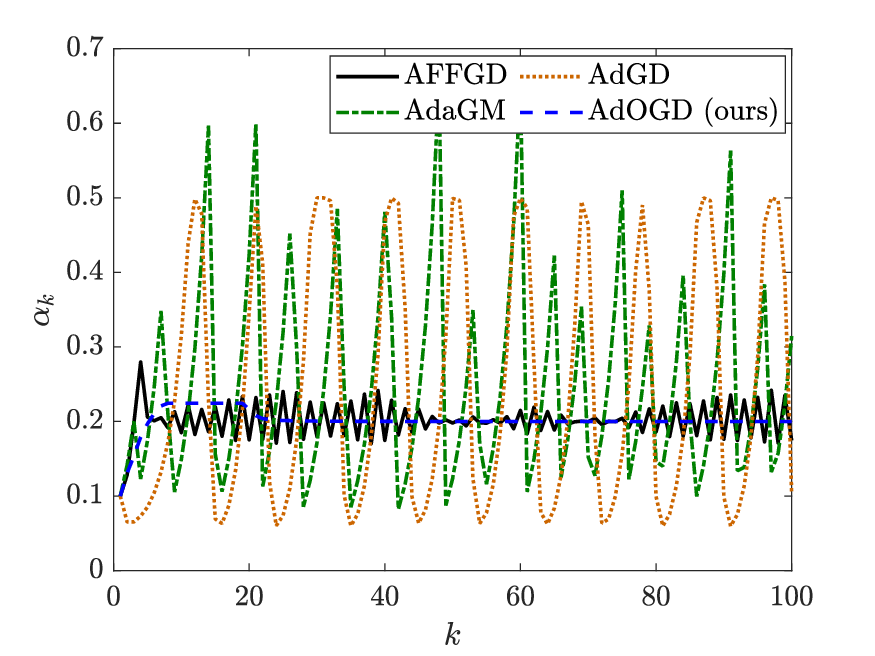}
                \label{fig4}
            \end{minipage}
        }\\ \vspace{-8pt}
        \subfloat[Our proposed stepsize converges to $\alpha^*$.]{
            \begin{minipage}[h]{0.95\linewidth}
                \centering
                \includegraphics[width=.8\textwidth]{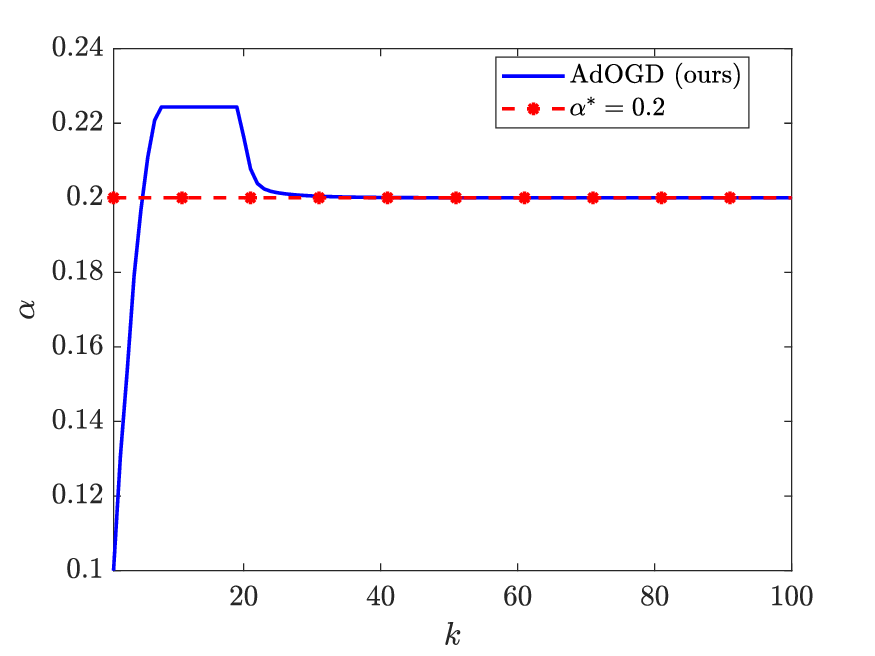}
                \label{fig4_added}
            \end{minipage}
        }\\ \vspace{-8pt}
        \subfloat[Estimates of local curvature with AdOGD.]{
            \begin{minipage}[h]{0.95\linewidth}
                \centering
                \includegraphics[width=.8\textwidth]{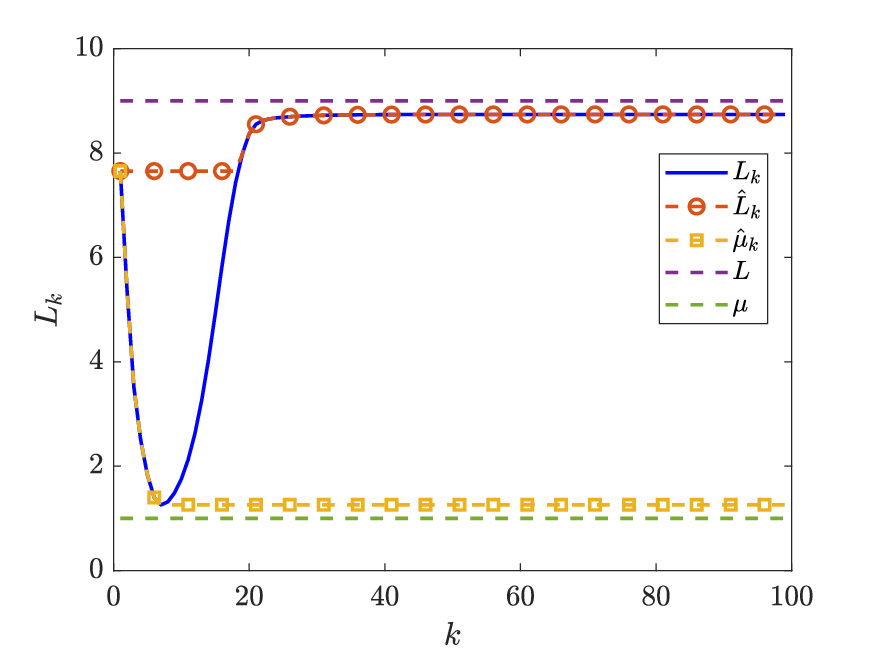}
                \label{fig3}
            \end{minipage}
        }\\  \vspace{-8pt}
        \subfloat[Lyapunov convergence rate.]{
            \begin{minipage}[h]{0.95\linewidth}
                \centering
                \includegraphics[width=.8\textwidth]{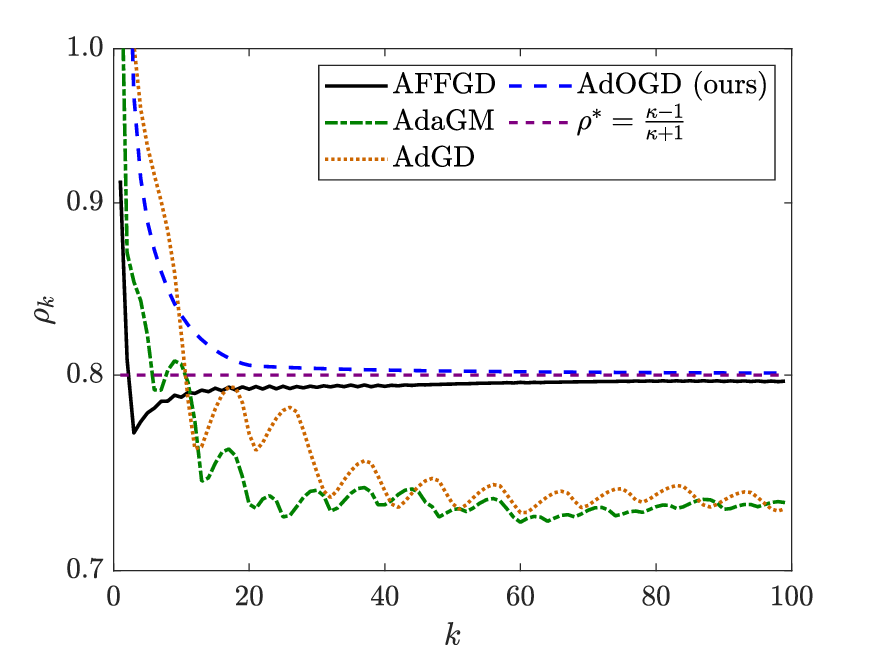}
                \label{fig5}
            \end{minipage}
        }
        \caption{Comparison with state-of-the-art algorithms for quadratic cost with $\mu=1$ and $L=9$.}
        \label{integrated_fig3}
    \end{figure}
	
	Figure~\ref{fig4} plots the trajectories of adaptive stepsizes.
	These significantly oscillate with AdaGM and AdGD while AFFGD exhibits modest oscillations.
	By contrast,
	the stepsize of our proposed AdOGD algorithm converges to the optimal constant value according to Theorem~\ref{thm1}; see Fig.~\ref{fig4_added}.
	Notably,
    the local smoothness estimates do not converge to $\hat{\mu}_k\not\to\mu$ and $\hat{L}_k\not\to L$,
	as shown by Fig.~\ref{fig3}.
	
	\begin{figure}
		\centering
		\includegraphics[width=.36\textwidth]{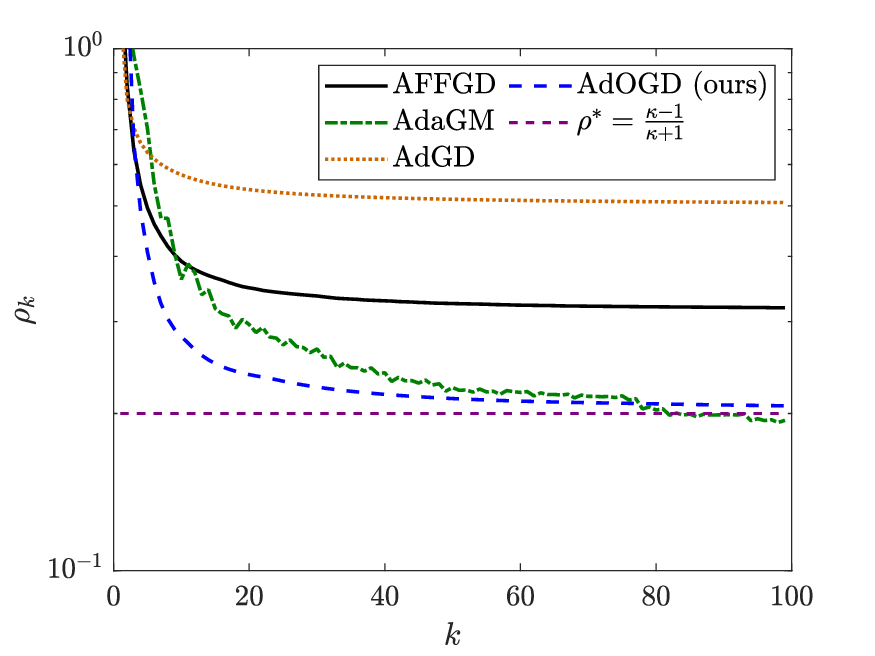}
		\caption{Lyapunov convergence rate for quadratic cost with $\mu=1$ and $L=1.5$.}\label{fig6}
	\end{figure}
	Figure~\ref{fig5} shows the Lyapunov convergence rate $\rho_k:=\exp(\frac{\ln\|x_k-x^*\|}{k})$.
	The asymptotic rate $\rho=\lim_{k\rightarrow{{+\infty}}}\rho_k$ characterizes the speed of algorithms in the long run and $\rho^*:=\frac{\kappa-1}{\kappa+1}$ captures the Lyapunov convergence rate with optimal constant stepsize $\alpha^*$, where $\kappa=\frac{L}{\mu}$ is the condition number. 
	Both our proposed AdOGD and AFFGD ultimately achieve the optimal rate with constant stepsize $\rho^*$ while AdaGM and AdGD are much faster.
    On the other hand, reducing the largest eigenvalue to $L=\kappa=1.5$, our algorithm is outperformed only by AdaGM and $\rho^*$; see Fig.~\ref{fig6}. This suggests that the relative performance of the compared methods depends on the condition number of the problem: when $\kappa$ is smaller, the optimal constant-step benchmark is already faster, leaving less room for oscillating stepsizes to provide an additional advantage.
	
	
	To get a sense of the performance of our approach in a more general setting,
	we preliminarily test with a strongly convex function for binary classification.
	Given $N$ features $s_i\in\mathbb{R}^n$ and labels $y_i\in\{\pm 1\}$, the goal is to find a linear classifier $x\in\mathbb{R}^n$ by solving 
	\begin{equation}\label{41}
		\min_{x \in \mathbb{R}^n}\;
		\frac{1}{N} \sum_{i=1}^N \log\!\left(1 + \exp(-y_i x^\top s_i)\right)
		+ \frac{\mu}{2} \|x\|^2.
	\end{equation}
	In this case,
	the smoothness constant is $L=\frac{1}{4N}\lambda_{\max}(S)^2$ where $S\in\mathbb{R}^{N\times n}$ is the feature matrix
	and the strong convexity constant is $\mu=0.01$. 
	To benchmark adaptive algorithms against an equivalent of the optimal constant stepsize in quadratic optimization,
    which has no standard definition with generic convex costs,
    we define the iteration–dependent critical and optimal stepsizes using the eigenvalues of the Hessian matrix evaluated at $x_k$
    \begin{equation}\label{alpha_c}
        \alpha_k^c:=\frac{2}{\lambda_{\max}(\nabla^2 f(x_k))}
    \end{equation}
    \begin{equation}\label{alpha_star}
        \alpha_k^*:=\frac{2}{\lambda_{\min}(\nabla^2f(x_k))+\lambda_{\max}(\nabla^2 f(x_k))}.
    \end{equation}
	Remarkably,
    Fig.~\ref{fig7} shows that $\alpha_k^\star$ and the adaptive stepsize $\alpha_k$ of AdOGD converge to (about the) same value,
    offering ground for systematic extension of our algorithm to convex optimization.
    Figure~\ref{fig9} shows that AdOGD and the benchmarks achieve the same convergence rate.
    This fact yields an interesting insight.
    Although adapting the stepsize online is practically necessary to ensure stability, making it oscillate indefinitely need not be the most effective strategy to speed up convergence in some cases.
    In this sense, our algorithm might prove a simple but effective choice compared to the benchmarks.
    
    \begin{figure}
        \centering
        \subfloat[Stepsizes with the compared adaptive strategies.]{
            \begin{minipage}[h]{0.95\linewidth}
                \centering
                \includegraphics[width=.76\textwidth]{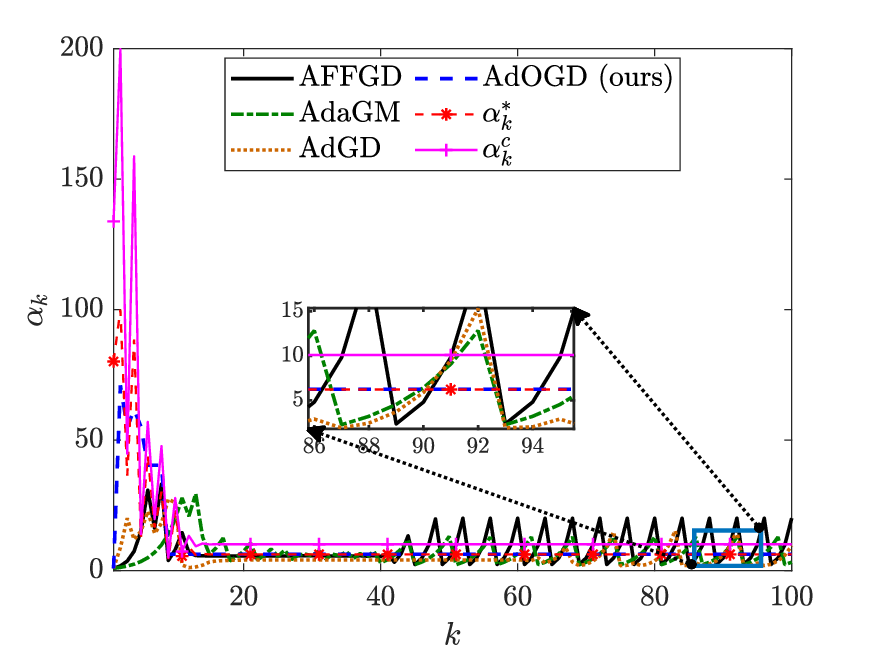}
    		    \label{fig7}
            \end{minipage}
        }\\ 
        \subfloat[Lyapunov convergence rate.]{
            \begin{minipage}[h]{0.95\linewidth}
                \centering
    		\includegraphics[width=.76\textwidth]{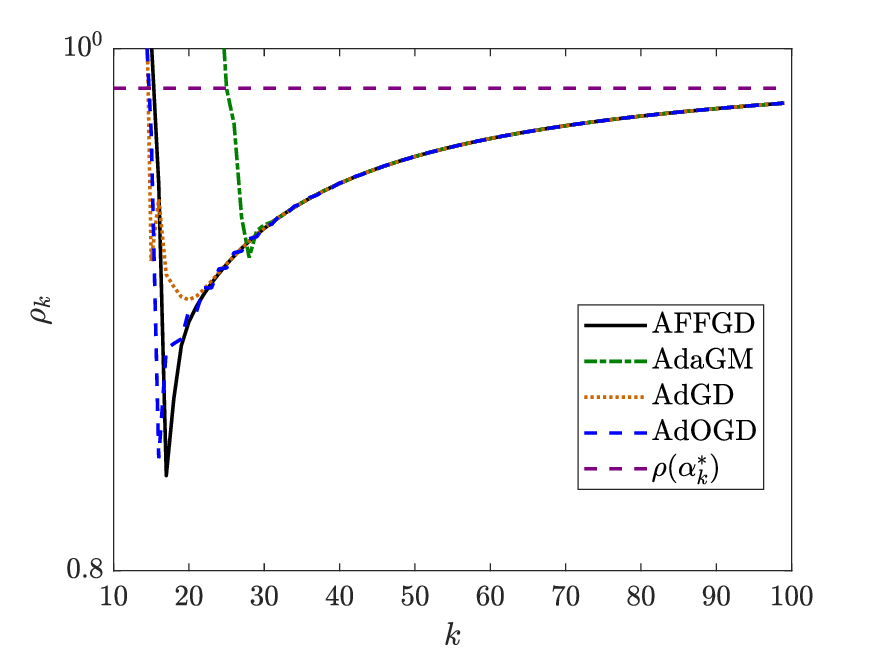}
    		\label{fig9}
            \end{minipage}
        }
        \caption{Comparison with state-of-the-art algorithms for the strongly convex function         in~\eqref{41}.}
        \label{integrated_fig5}
    \end{figure}
    \section{Conclusion}\label{sec:conclusion}
	We have designed an adaptive stepsize for gradient descent to achieve the fastest asymptotic convergence with constant stepsize.
	We have used estimates of minimal and maximal local curvatures to track the optimal constant stepsize for quadratic optimization problems.
	Theoretical results prove asymptotic convergence of our proposed stepsize,
	and numerical studies support that our approach effectively speeds up the convergence.
	Moreover, we have tested with a strongly convex function where the convergence rate is as fast as the benchmarks.
	Future research will broaden the scope of the present study to fastest gradient descent for convex functions. 
	To this aim,
	we point out that results discussed by \cite{hedging} provide promising ground for varying stepsizes.
    In addition,
    we aim to extend our adaptive strategy to distributed settings where global parameters are unknown.

\end{document}